\newcommand{\CC}{\mathbb{C}}
\newcommand{\TT}{\mathbb{T}}
\newcommand{\DD}{\mathbb{D}}
 \DeclareMathOperator{\dd}{d}
\newtheorem{thm}{{Theorem }}
\newtheorem{lem}{{Lemma }}
\newcommand{\cA}{{\mathcal{A}}}
\newcommand{\cB}{{\mathcal{B}}}
\begin{document}
\author[El-Fallah]{Omar El-Fallah}
\address{D\'{e}partement de Math\'ematiques
\\Universit\'e Mohamed V\\B.\ P.\ 1014 Rabat\\Morocco}
\email{elfallah@fsr.ac.ma}
\author[Kellay]{Karim Kellay}
\address{CMI\\LATP\\Universit\'e de Provence\\
39, rue F. Joliot-Curie\\13453 Marseille Cedex 13\\France}
\email{kellay@cmi.univ-mrs.fr}
\author[Seip]{Kristian Seip}
\address{Department of Mathematical Sciences\\ Norwegian University of Science and Technology\\
NO-7491 Trondheim\\ Norway} \email{seip@math.ntnu.no}
\thanks{The first  author was supported by Egide-Volubilis. The second author was supported by Egide-Volubilis and ANRDynop.
The third author was supported by the Research Council of Norway
grant 185359/V30. This work started when the third author was
visiting CMI, and he thanks LATP for its support and
hospitality}.
\title[Cyclicity of singular inner functions]{Cyclicity of singular inner functions \\
from the  corona theorem}
\date{\today}

\keywords{Weighted Bergman space, cyclic function, singular inner
function, corona theorem}

\subjclass[2000]{ 30H20;  30H80, 47A16.}
\maketitle

\begin{abstract}
Carleson's corona theorem is used to obtain two results on cyclicity
of singular inner functions in weighted Bergman-type spaces on the
unit disk. Our method proof requires no regularity conditions on the
weights.
\end{abstract}

\section{introduction}
This paper studies cyclicity of singular inner functions in two
different classes of weighted Bergman-type spaces. In both cases,
our proofs rely crucially on Carleson's corona theorem. An
interesting feature of this method of proof is that regularity
conditions on the weights can be avoided.

We begin by considering weighted $\ell^2$ spaces, viewed as spaces
of analytic functions in the unit disk. We say that a sequence of
positive numbers $\omega=(\omega(n))_{n\geq 0}$ is a weight sequence
if $\omega(n)\nearrow +\infty$ and $\log \omega(n)=o(n)$ when
$n\to\infty$. With every weight sequence $\omega$ we associate the
weighted Bergman space $\cA_\omega^2$ which consists of all analytic
functions $f(z)=\sum_{n=0}^\infty a_n z^n$ on the open unit disk
$\DD$ such that
$$\|f\|_{\omega,2}^2 =\sum_{n\geq 0} \frac{|a_n|^2}{\omega(n)^2} <\infty.$$
A function $f$ in $\cA^2_\omega$ is said to be cyclic in
$\cA^2_\omega$ if the set of functions $pf$ with $p$ a polynomial is
dense in $\cA^2_\omega$.

%We denote by $H^\infty$ the Hardy space the set of all analytic bounded functions on $\DD$

We will prove the following theorem.

\begin{thm}\label{cyclic} Let $\omega$ be a weight sequence. If
\begin{equation}
\label{newquasi}\sum_{n\ge 1}\frac{(\log
\omega(n))^2}{n^{2}}=\infty,
\end{equation}
then every function in $H^\infty$ without zeros in $\DD$ is cyclic
in $\cA^{2}_{\omega}$;

\end{thm}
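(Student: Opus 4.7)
To show that $f \in H^\infty$ without zeros in $\DD$ is cyclic in $\cA^2_\omega$, it suffices to construct, for each $\epsilon > 0$, a polynomial $p$ with $\|pf - 1\|_{\omega, 2} < \epsilon$. A standard fact for these radially weighted $\ell^2$-type spaces is that $H^\infty$ acts as a bounded multiplier algebra on $\cA^2_\omega$ (with multiplier norm at most the sup norm), and polynomials are dense in $\cA^2_\omega$. Hence it is enough to first produce a function $\phi \in H^\infty$ with $\|\phi f - 1\|_{\omega, 2} < \epsilon/2$, and then approximate $\phi$ by a polynomial in $\cA^2_\omega$-norm.

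The function $\phi$ will be built via Carleson's corona theorem applied to a suitable pair of $H^\infty$-data associated with $f$. The natural first attempt is $(f, f_r)$, where $f_r(z) := f(rz)$ for some $r \in (0, 1)$ to be chosen: the dilate $f_r$ extends analytically across $\TT$ and remains zero-free on $\overline\DD$, so $\delta(r) := \inf_{z \in \DD}|f_r(z)| > 0$, and the corona hypothesis $|f| + |f_r| \geq \delta(r)$ is trivially satisfied. The corona theorem yields $\phi_r, \psi_r \in H^\infty$ with $\phi_r f + \psi_r f_r = 1$ and $\|\phi_r\|_\infty, \|\psi_r\|_\infty \leq C \delta(r)^{-A}$ for universal $C, A$. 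Setting $\tilde\phi_r := \phi_r + \psi_r$ and adding and subtracting $\psi_r f$ rewrites the corona identity as
\[
\tilde\phi_r f - 1 = \psi_r(f - f_r),
\]
so that
\[
\|\tilde\phi_r f - 1\|_{\omega, 2} \leq \|\psi_r\|_\infty\, \|f - f_r\|_{\omega, 2} \leq C \delta(r)^{-A}\, \|f - f_r\|_{\omega, 2}.
\]

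The main obstacle, and the technical heart of the proof, is to make this product arbitrarily small by a judicious choice of $r$. The norm $\|f - f_r\|_{\omega, 2}^2 = \sum |a_n|^2 (1 - r^n)^2/\omega(n)^2$ is analyzed by splitting the sum at $n \simeq 1/(1-r)$ and using $(1 - r^n)^2 \leq \min(1, n^2(1-r)^2)$, reducing everything to partial sums of $1/\omega(n)^2$ controlled in terms of $1-r$. The corona constant $\delta(r)^{-A}$, however, may grow very fast---as fast as $\exp(cA/(1-r))$ for $f$ with an atomic singular-inner factor---so choosing $r$ near $1$ naively will not suffice. The divergence hypothesis $\sum (\log \omega(n))^2/n^2 = \infty$ is precisely the quantitative input ensuring that, along some sequence of indices, $\omega$ grows fast enough to dominate the corona blow-up; I anticipate that the balancing argument may also require refining the corona pair (replacing $f_r$ by a carefully chosen auxiliary $H^\infty$ function small in $\cA^2_\omega$), or coupling the scale $1/(1-r)$ to the degree of a polynomial approximation. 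Crucially, the method avoids regularity hypotheses on $\omega$ because the corona theorem requires none, and all structural conditions on $\omega$ are pushed into the final balancing step.
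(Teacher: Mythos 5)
Your setup is sound---reducing to a bounded $\phi$ with $\|\phi f-1\|_{\omega,2}$ small via the multiplier property, and producing $\phi$ from a corona pair---and this is indeed the strategy of the paper. But the proof has a genuine gap exactly where you flag it, and the gap is fatal for the specific pair $(f,f_r)$: if $f$ has an atomic singular inner factor of mass $c^2$, then $\delta(r)\le e^{-2c^2/(1-r)}$, so the corona constant is at least $e^{6c^2/(1-r)}$, while the best general decay of $\|f-f_r\|_{\omega,2}$ obtainable from your splitting at $n\simeq 1/(1-r)$ is of order $1/\omega(\lfloor 1/(1-r)\rfloor)$. Balancing would require $\log\omega(N)\gtrsim N$, which is excluded by the standing hypothesis $\log\omega(n)=o(n)$ on weight sequences. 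So no choice of $r$ can work, for \emph{any} admissible weight; the ``balancing step'' cannot be carried out with this corona pair, and the divergence hypothesis \eqref{newquasi} never enters.

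The paper closes this gap with two ideas you did not supply. First, after stripping off the outer factor (which is cyclic already in $H^2\subset\cA^2_\omega$), it applies the corona theorem to the pair $(U,z^n)$, where $U$ is the singular inner factor: the competition between $|U(z)|\ge e^{-2c^2/(1-|z|)}$ and $|z|^n$ is resolved at $1-|z|\simeq c/\sqrt{n}$, giving $\inf(|U|+|z|^n)\ge e^{-2c\sqrt{n}}$ --- a square-root gain over $e^{-2c^2/(1-r)}$ --- while the remainder is controlled by $\|z^ng_n\|_{\omega,2}\le\|g_n\|_\infty/\omega(n)$. This alone would still only balance under the stronger condition $\log\omega(n)\gtrsim\sqrt{n}$ (essentially \eqref{quasicond}). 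The second, decisive idea is to factor $U=\prod_{j=1}^N U^{\lambda_j^2}$ with $\sum\lambda_j^2=1$, apply the corona lemma to each factor (whose singular mass $\lambda_j^2c^2$ is small, so its corona constant is only $e^{A(c\lambda_j\sqrt{m_j}+1)}$), and telescope
\[
1-U\prod_j f_j=(1-U_1f_1)+U_1f_1(1-U_2f_2)+\cdots .
\]
The parameters $\lambda_j$ and $m_j$ are then chosen using a combinatorial reformulation of \eqref{newquasi} (existence of indices $n_j$ with $\log\omega(n_{j+1})\ge 2\log\omega(n_j)$ and $\sum(\log\omega(n_j))^2/n_j=\infty$), which is exactly where the divergence hypothesis is consumed. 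Without the substitution $f_r\rightsquigarrow z^n$ and the multiplicative decomposition, the argument does not go through.
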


This result has its roots in work of Beurling \cite{B} and Nikolskii
\cite[\S 2.6, Theorem 2]{N}. Requiring a certain regularity
condition on $\omega$, Beurling proved that every function in
$H^\infty$ without zeros in $\DD$ is cyclic in $\displaystyle
\bigcup_{n\geq 1}\cA^{2}_{\omega^n}$, equipped with the inductive
limit topology,  if and only
\begin{equation}
\label{quasicond} \sum_{n\ge 1}\frac{\log
\omega(n)}{n^{3/2}}=\infty.
\end{equation}
 The Hilbert space case was considered by Nikolskii \cite[\S
2.6, Theorem 2]{N} who proved that whenever $\omega$ is log-concave,
i.e., $\omega^2(n)\geq \omega(n+1)\omega(n-1)$), the divergence
condition \eqref{quasicond} implies that every function in
$H^\infty$ without zeros in $\DD$ is cyclic in $\cA^2_\omega$.
Beurling used Bernstein's approximation theorem, while Nikolskii
relied on a theorem on quasi-analyticity that requires the
log-concavity condition. Beurling pointed out at the end of his
paper that he could not dispense with a certain convexity condition
and that it remained an open problem to obtain a general sufficient
condition for cyclicity.

Thus the novelty of Theorem~1, besides the method of proof, is the
absence of any regularity condition on $\omega$. In the second
remark after Theorem 2 below, we will give an example showing that
Theorem~1 enables us to deal with weights that are not covered by
Nikolskii's theorem, in spite of the fact that the divergence
condition \eqref{newquasi} implies \eqref{quasicond}.

We now turn to our second result, which deals with a situation in
which growth restrictions are nonradial. Let $E$ be a closed subset
of $\TT$ and let $\Lambda$ be a nonincreasing and positive function
on $]0,2]$ such that $\Lambda (0^+)=+\infty$. We denote by
$\cB^\infty_{\Lambda,E}$ the space of all analytic functions $f$ on
$\DD$ such that \[ \|f\|_{\Lambda,E,\infty}= \sup_{z\in \DD}
|f(z)|e^{-\Lambda (\dd(z,E))};
\]
here $\dd(\cdot,\cdot)$ stands for Euclidean distance on $\CC$. Let
$I$ be the singular inner function defined by
$$I(z)=e^{-\frac{1+z}{1-z}}.$$
Gevorkyan and Shamoyan  showed in \cite{GS} that if $E=\{1\}$ and
$\Lambda $ satisfies certain regularity conditions, then $I$ is
cyclic in $\cB_{\Lambda,\{1\}}^\infty $ if and only if $\Lambda$
fails to be integrable. We will now prove the same result without
any additional assumption on $\Lambda$.
\begin{thm}
The singular inner function $I$ is cyclic in
$\cB_{\Lambda,\{1\}}^\infty$ if and only if
\begin{equation}\label{Lambdanonint}
\int _0^2\Lambda (t)dt=\infty.
\end{equation}
\end{thm}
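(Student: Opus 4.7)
For the \emph{necessity}, assume $\int_0^2 \Lambda(t)\,dt < \infty$. Via the substitution $t=2|\sin(\theta/2)|$, this translates into $\int_0^{2\pi}\Lambda(|1-e^{i\theta}|)\,d\theta < \infty$, so $\Lambda(|1-\cdot|)$ is integrable on $\TT$. If $I$ were cyclic, there would be polynomials $p_n$ with $p_nI \to 1$ in the growth norm; passing to boundary values and using $|I|=1$ a.e.\ on $\TT$, one obtains $|p_n(e^{i\theta})|\leq 1+\epsilon_n e^{\Lambda(|1-e^{i\theta}|)}$, so $\log^+|p_n|$ is dominated on $\TT$ by an $L^1$-majorant. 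This integrability produces a nontrivial continuous linear functional on $\cB^\infty_{\Lambda,\{1\}}$, obtained as a boundary pairing against a suitable positive weight, which annihilates $I\cdot\mathbb{C}[z]$ but is nonzero on $1$, contradicting cyclicity.

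For the \emph{sufficiency}, assume $\int_0^2 \Lambda(t)\,dt = \infty$. A standard multiplier argument reduces the problem to showing that the constant function $1$ lies in the closure of $I\cdot\mathbb{C}[z]$. Fix $\eta > 0$. The central step is to produce an outer function $u \in H^\infty$ with
\begin{equation*}
|I(z)|+|u(z)|\geq\delta>0 \quad (z\in\DD) \qquad\text{and}\qquad \|u\|_{\Lambda,\{1\},\infty}<\eta.
\end{equation*}
I would take $u$ as the outer function with boundary modulus $\phi(\theta)$ close to $1$ on a short arc $|\theta|<\alpha$ around $0$, and equal to a small constant $\epsilon$ off this arc. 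Then $|u(z)|$ is bounded below by $\epsilon$ on $\DD$ (compensating for the smallness of $|I|$ in the horocyclic region near $z=1$), while $|u(z)|$ is close to $1$ only where $e^{-\Lambda(|1-z|)}$ is small. The divergence $\int_0^2 \Lambda(t)\,dt = \infty$ is used precisely to choose $\alpha$ and $\epsilon$ so that the Poisson spread of $\log\phi$ keeps $\|u\|_{\Lambda,\{1\},\infty}$ small while the two-sided constraints remain compatible.

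Given such $u$, Carleson's corona theorem applied to the pair $(I,u)$ yields $a,b\in H^\infty$ with $aI+bu=1$; hence
\[
\|1-aI\|_{\Lambda,\{1\},\infty}=\|bu\|_{\Lambda,\{1\},\infty}\leq\|b\|_\infty\,\|u\|_{\Lambda,\{1\},\infty}.
\]
Approximating $a\in H^\infty$ by a polynomial $p$ (e.g.\ via the dilates $a(rz)$ with $r\to 1$ followed by a Taylor truncation, noting that the factor $I$ in the product tames boundary behavior) then gives $\|1-pI\|_{\Lambda,\{1\},\infty}$ arbitrarily small.

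The main obstacle is the quantitative construction of $u$ in the absence of any regularity hypothesis on $\Lambda$. Since the corona constant grows as $\delta\to 0$, one must couple the parameters $\alpha$, $\epsilon$ and $\delta$ carefully (possibly iterating the construction) to ensure that $\|b\|_\infty\|u\|_{\Lambda,\{1\},\infty}$ does indeed tend to zero. The divergence of $\int \Lambda$ is exactly the hypothesis that allows this coordination to be carried out for arbitrary nonincreasing $\Lambda$, and its use through the corona theorem is what gives the theorem without any further regularity assumption.
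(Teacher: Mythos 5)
Your overall strategy for the sufficiency direction --- produce an auxiliary function $u$ that is small in the $\cB^\infty_{\Lambda,\{1\}}$ norm but keeps $|I|+|u|$ bounded below on $\DD$, then solve a Bezout equation via the corona theorem --- is exactly the paper's starting point, and your candidate $u$ (outer, boundary modulus close to $1$ on a short arc about the point $1$ and a small constant elsewhere) is essentially the paper's function $f_\delta$ of Lemma~\ref{function-delta}. But the step you yourself flag as ``the main obstacle'' is a genuine gap, and a \emph{single} application of the corona theorem cannot close it. Quantitatively: with $\delta=1/n$, the best lower bound for $|I|+|f_\delta|$ on $\DD$ is of order $e^{-c_0\sqrt{n\Lambda(1/n)}}$ (obtained by balancing the horocyclic region where $|I|$ is small against the harmonic-measure estimate for $|f_\delta|$ --- note that your trivial lower bound $|u|\ge\epsilon$ everywhere is far too weak, since there $\epsilon\approx e^{-2\pi\Lambda(\delta)}$ and the resulting corona constants swamp $\|u\|_{\Lambda,\{1\},\infty}\le e^{-\pi\Lambda(\delta)}$). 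Hence $\|b\|_\infty\lesssim e^{3c_0\sqrt{n\Lambda(1/n)}}$ while $\|u\|_{\Lambda,\{1\},\infty}\le e^{-\pi\Lambda(1/n)}$, and the product is small only if $n\lesssim\Lambda(1/n)$; this fails for many non-integrable $\Lambda$, e.g.\ $\Lambda(t)\asymp t^{-1}|\log t|^{-1}$ near the origin. The missing idea is the iteration scheme imported from the proof of Theorem~\ref{cyclic}: factor $I=\prod_{j=1}^N I_{\lambda_j^2}$ with $\sum_j\lambda_j^2=1$, apply the corona lemma to each pair $(I_{\lambda_j^2},f_{1/n_j})$ so that the relevant comparison becomes $\lambda_j^2 n_j\lesssim\Lambda(1/n_j)$ (achievable because the $\lambda_j$ may be taken small), and control the telescoping sum $1-I\prod_jf_j=\sum_{j}\bigl(\prod_{k<j}I_{\lambda_k^2}f_k\bigr)(1-I_{\lambda_j^2}f_j)$. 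The sequence $(n_j)$ and the weights $\lambda_j$ are produced by Lemma~\ref{equiv} applied to $\sum_n\Lambda(1/n)/n^2=\infty$, which is precisely where the divergence hypothesis enters; without this factorization the argument does not go through for general nonincreasing $\Lambda$.

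For the necessity direction, your first step (an $L^1(\TT)$ majorant for $\log^+|p_n|$) is correct, but the conclusion via an unspecified annihilating linear functional is left entirely unconstructed, and it is not clear how to produce such a functional in this weighted sup-norm setting. The paper's (Keldy\v{s}'s) route is direct and you should use it: from $|p_n(\zeta)|\le Ce^{\Lambda(|1-\zeta|)}$ on $\TT$ and the generalized maximum principle, $|p_n|$ is dominated in $\DD$ by $C$ times the outer function with boundary modulus $e^{\Lambda(|1-\zeta|)}$, which exists precisely because $\Lambda(|1-e^{it}|)$ is integrable; since convergence in $\cB^\infty_{\Lambda,\{1\}}$ implies pointwise convergence, $p_n(z)\to 1/I(z)$, so $1/|I|$ would be dominated by an outer function, which is impossible for a nontrivial singular inner function. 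Replacing your duality claim by this pointwise argument closes that half of the proof.
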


Several remarks are in order before we turn to the proofs of our
theorems: \vspace{3mm}

\textbf{1.} Cyclicity of the singular inner function $I$ was first
considered in weighted approximation theory by Keldy\v{s} \cite{K}
and Beurling \cite{B}. See also \cite[\S 2.8, Theorem 1]{N}. The
idea of using the corona theorem in this context goes back to
Roberts \cite{R}. \vspace{3mm}

\textbf{2.} We next give an example of a weight that satisfies
\eqref{newquasi} but to which Nikolskii's theorem does not apply.
Set
\[ \log \omega(n)= 2^{2^{j-1}}, \ \ \ n\in
[2^{2^j},2^{2^{j+1}}) \] for $j=1,2, ...$. Then clearly
\eqref{newquasi} holds, but we may check that if
$\widetilde{\omega}$ is log-concave and $\widetilde{\omega}\le
\omega$, then
\[
\sum_{n\ge 1}\frac{\log \widetilde{\omega}(n)}{n^{3/2}}<+\infty.
\]
To see this, let $g$ be the linear function that satisfies
$g(2^{2^{j-1}})=2^{2^{j-3}}$ and $g(2^{2^j})=2^{2^{j-2}}$. By
concavity, $\widetilde{\omega}(n)\le \min(2^{2^{j-1}}, g(n))$ for
$n$ in the interval $[2^{2^j},2^{2^{j+1}})$. A straightforward
computation shows that the piecewise linear function
\[ h(n)=\min(2^{2^{j-1}}, g(n)), \ \ \ n\in [2^{2^j},2^{2^{j+1}}) \]
satisfies
\[ \sum_{n>4} \frac{h(n)}{n^{3/2}}<\infty. \] \vspace{3mm}

\textbf{3.} The assumption that $\omega$ is nondecreasing implies
that the shift operator is a contraction on $\cA_{\omega}^2$, a fact
that plays an essential role in the proof of Theorem 1 given below.
One may ask if this monotonicity can be dispensed with. While we can
not rule out the possibility that it can be relaxed, the following
example shows that it can not simply be removed. Namely, let
$\omega$ be any sequence such that $\omega (2n)= 1$. Then $U(z^2)$
is not cyclic in $\cA^2_{_\omega}$ when $U$ is an arbitrary inner
function. Indeed, if there is a sequence of polynomials $p_n$ such
that $\|p_n (z)U(z^2)-1\|_{\omega,2}\to 0$, we may write $p_n(z)=
q_n(z^2)+zh_n(z^2)$; then $\|q_n (z^2)U(z^2)-1\|_{\omega,2}\to 0$
which means that $U$ is cyclic in $H^2$. But this contradicts
Beurling's theorem.  \vspace{3mm}

\textbf{4.} We have the following relation between the two kinds of
Bergman spaces considered in the present work. We denote by $\mu$
normalized Lebesgue measure on $\DD$ and define
$\cB_{\Lambda,\TT}^{2}$ to be the space of all analytic functions
$f$ on $\DD$ such that
\[ \|f\|_{\Lambda,\TT,2}^2=\int_\DD
|f(z)|^2e^{-2\Lambda(1-|z|)}d\mu(z)<\infty.
\]
A simple computation shows that $f(z)=\sum_{n=0}^\infty a_n z^n$
belongs to $\cB_{\Lambda,\TT}^{2}$ if and only if
$$
\|f\|_{\Lambda,\TT,2}^{2}=\sum_{n\geq 0}\frac{|a_n|^2}{ \omega(n)^2}<
\infty,
$$
where
\begin{equation}\label{moment}
\omega(n)^{-2}=\int_{0}^{1}r^{2n+1}e^{-2\Lambda(1-|z|)} dr,\qquad
n\geq 0.
\end{equation}
Note that this moment sequence $\omega$ is log-concave. Conversely,
if $\omega$ is a log-concave weight sequence, then there exists a
$\Lambda$ such that
\[ c\omega(n)\le \left(\int_{0}^{1}r^{2n+1}e^{-2\Lambda(1-|z|)} dr\right)^{-1/2}\le C \omega(n)\]
for positive constants $c$ and $C$ independent of $n\ge 1$
 \cite[Proposition 4.1]{BH}.
 However, in general, we can not write
 $\cA^2_\omega$ as $\cB_{\Lambda,\TT}^2$.
\vspace{3mm}

\textbf{5.} In \cite[\S 2.6, \S 2.7]{N}, Nikolskii proved, under
some regularity conditions on $\Lambda$, that $I$ is cyclic in
$\cB_{\Lambda,\TT}^2$ if and only if
\begin{equation}\label{nikolski condition}
\displaystyle \int _0^2\sqrt{\frac{\Lambda (t)}{t}}dt =\infty.
\end{equation}
It is interesting to note when $\Lambda$ and $\omega$ are related as
in \eqref{moment}, then, under suitable regularity
 conditions on $\Lambda$,
\eqref{nikolski condition} is equivalent to
 \eqref{quasicond} and \eqref{Lambdanonint} is equivalent to \eqref{newquasi}
 \cite[\S 2.6 Lemma~1, Lemma~2]{N}. As will be pointed out in Section 3 below,
 a slight variant of our proof of Theorem~2 gives
 that \eqref{Lambdanonint} is in fact sufficient for every singular inner function $U$ to be
 cyclic in $\cB_{\Lambda,\TT}^2$. Note that, again, no
 additional regularity condition on $\Lambda$ is required.

 \section{Proof of Theorem 1}

Our proof of Theorem~\ref{cyclic} will rely on three lemmas.

The first lemma gives a convenient reformulation of condition
\eqref{newquasi} of Theorem~\ref{cyclic}. We will use only one of
the implications of the lemma, but we find the result to be  of some
general interest and give therefore the simple proof of the full
equivalence between the two conditions.

\begin{lem}\label{equiv}
Let $\omega$ be a weight sequence. Then the divergence condition
\eqref{newquasi} of Theorem~\ref{cyclic} holds if and only if there
exists a sequence of nonnegative integers $(n_j)_{j\ge 0}$ such that
$\log \omega(n_{j+1})\ge 2\log \omega(n_j)$ and
\begin{equation}
\label{quasi} \sum_{j\ge 0}\frac{(\log\omega(n_j))^2}{{n_j}}=\infty.
\end{equation}
\end{lem}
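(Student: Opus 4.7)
The plan is to prove both implications by a partition argument: break the full series $\sum_n(\log\omega(n))^2/n^2$ into the blocks $[n_j,n_{j+1})$ cut out by the subsequence and compare each block sum to the single term $(\log\omega(n_j))^2/n_j$. The two key ingredients are the monotonicity of $\omega$ and the geometric growth of $\log\omega(n_j)$ built into the doubling condition.

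For the implication $(\Leftarrow)$, I would assume the sequence $(n_j)$ is given. Since $\omega$ is nondecreasing, $\log\omega(n)\ge\log\omega(n_j)$ on each block, and a comparison with the integral of $1/x^{2}$ gives
$$\sum_{n=n_j}^{n_{j+1}-1}\frac{(\log\omega(n))^2}{n^2}\ge(\log\omega(n_j))^2\Bigl(\frac{1}{n_j}-\frac{1}{n_{j+1}}\Bigr).$$
Summing in $j$ produces two telescoping-like pieces. The doubling inequality $(\log\omega(n_j))^2\le\frac{1}{4}(\log\omega(n_{j+1}))^2$ lets me bound $\sum_j(\log\omega(n_j))^2/n_{j+1}$ by $\frac{1}{4}\sum_{j\ge 1}(\log\omega(n_j))^2/n_j$, so that at least $\frac{3}{4}$ of the divergent series \eqref{quasi} survives, giving \eqref{newquasi}.

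For the harder direction $(\Rightarrow)$, I would construct $(n_j)$ greedily: let $n_0$ be the smallest integer $\ge 2$ with $\log\omega(n_0)>0$ and set inductively $n_{j+1}=\min\{n>n_j : \log\omega(n)\ge 2\log\omega(n_j)\}$, which is well defined because $\omega(n)\nearrow\infty$. The doubling property is then built in. On each block, the minimality of $n_{j+1}$ forces $\log\omega(n)<2\log\omega(n_j)$, and hence
$$\sum_{n=n_j}^{n_{j+1}-1}\frac{(\log\omega(n))^2}{n^2}\le 4(\log\omega(n_j))^2\sum_{n\ge n_j}\frac{1}{n^2}\le C\,\frac{(\log\omega(n_j))^2}{n_j}.$$
Summing over $j$ and invoking the hypothesis \eqref{newquasi} yields \eqref{quasi}.

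The only point requiring a little care is verifying that the greedy construction is well defined and that discarding the finitely many initial terms with $\log\omega(n)\le 0$ does not affect divergence; both are immediate from $\omega(n)\to\infty$, so I do not anticipate a genuine obstacle.
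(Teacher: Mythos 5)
Your forward direction (the greedy construction of $(n_j)$ and the block estimate via minimality of $n_{j+1}$) is exactly the paper's argument and is fine. The gap is in your $(\Leftarrow)$ direction. Writing $a_j=(\log\omega(n_j))^2/n_j$ and $b_j=(\log\omega(n_j))^2/n_{j+1}$, the step ``$\sum_j b_j\le\frac14\sum_{j\ge1}a_j$, so at least $\frac34$ of the divergent series survives'' is an $\infty-\infty$ manipulation: both series diverge, so you cannot subtract them. Made rigorous with partial sums, your chain of inequalities reads
\[
\sum_{n=n_0}^{n_{J+1}-1}\frac{(\log\omega(n))^2}{n^2}\;\ge\;\sum_{j=0}^{J}(a_j-b_j)\;\ge\;\frac34\sum_{j=0}^{J}a_j-\frac14\,a_{J+1},
\]
and the boundary term $\frac14 a_{J+1}$ can swamp the partial sum for \emph{every} $J$. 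Concretely, take $\log\omega(n)=10^{j}$ for $20^{j}\le n<20^{j+1}$: this is a legitimate weight sequence, the doubling condition holds for $n_j=20^j$, and $a_j=5^{j}$ with $\sum_j a_j=\infty$; yet
\[
\frac34\sum_{j=0}^{J}5^{j}-\frac14\,5^{J+1}=-\frac{5^{J+1}+3}{16}<0
\]
for every $J$, so your lower bound is vacuous throughout. (The lemma of course still holds for this weight --- a single block already contributes $\frac{19}{20}5^{j}\to\infty$ --- but your inequalities do not show it.)

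To repair this you must rule out the regime in which $a_{J+1}$ dominates $\sum_{j\le J}a_j$, which requires an additional input ($\log\omega(n)=o(n)$ combined with the geometric growth of $\log\omega(n_j)$), or you can follow the paper's route, which avoids subtraction entirely: it first uses the doubling of $\log\omega(n_j)$ to check that the indices $l$ with $n_j/2\le n_l\le n_j$ contribute at most $4a_j$ to $\sum_l a_l$, so one may pass to a further subsequence satisfying $n_{j+1}\ge 2n_j$ without losing divergence of \eqref{quasi}; under that extra growth the one-sided comparison $a_j\le 4\sum_{n=n_j}^{n_{j+1}-1}(\log\omega(n))^2/n^{2}$ holds block by block, and summing over $j$ finishes the proof with no cancellation to control.
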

\begin{proof} If \eqref{newquasi}  holds, then we define $n_j$ inductively by
setting $n_0=1$ and requiring
$$n_{j+1}=\min\{n>n_j:\  \log\omega(n)\ge 2\log\omega(n_j)\}.$$
Since
\[\sum_{n=n_j}^{n_{j+1}-1}\frac{(\log \omega(n))^2}{n^{2}}\le
4 (\log\omega(n_j))^2 \sum_{n=n_j}^{n_{j+1}-1} \frac{1}{n^{2}}\le 4
\frac{(\log\omega(n_j))^2}{{n_j-1}}, \] we conclude that condition
\eqref{newquasi}  of Theorem~\ref{cyclic} implies \eqref{quasi}.

To prove the reverse implication, we observe that if
$\log\omega(n_{j+1})\ge 2 \log\omega(n_j)$ for every $j$, then
\[ \sum_{l \text{ : } \frac{n_j}{2}\le n_l \le n_j}\frac{(\log\omega(n_l))^2}{{n_l}}
\le 4\frac{(\log\omega(n_j))^2}{{n_j}}.\] Thus we may
assume that the sequence $(n_j)$ satisfies the exponential growth
condition $n_{j+1}\ge 2 n_j$, in which case we have
\[ \frac{(\log \omega (n_j))^2}{{n_j}}
\le4\sum_{n=n_j}^{n_{j+1}-1}\frac{(\log \omega(n))^2}{n^2},
\]
and so \eqref{quasi} implies the divergence condition \eqref{newquasi} of
Theorem~\ref{cyclic}.
\end{proof}

Since $\omega$ is nondecreasing, the shift operator $S$ defined by
$Sf(z)=zf(z)$ is a contraction on $\cA^{2}_{\omega}$. It follows
that for $f$ in $H^\infty$, $f(S)$ makes sense by $H^\infty$
functional calculus. Therefore, by von Neumann's inequality, we have
$$\|f\varphi\|_{\omega,2}=\|f(S)\varphi\|_{\omega,2}\leq
\|f(S)\|\|\varphi\|_{\omega,2}\leq\|f\|_\infty\|\varphi\|_{\omega,2}$$
for every $\varphi$ in $\cA^{2}_{\omega}$. The next lemma is a
simple consequence of this fact.

\begin{lem}\label{lemsimple}
Let $U$ be a singular inner function and $\lambda$ a positive
number. Then $U$ is cyclic in $\cA^{2}_{\omega}$ if and only if
$U^\lambda$ is cyclic in $\cA^{2}_{\omega}$.
\end{lem}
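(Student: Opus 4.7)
The plan is to analyse the family of cyclic subspaces $[g]:=\overline{\{pg:p\text{ a polynomial}\}}$ as $g$ ranges over the powers $U^t$, $t>0$, and to combine two monotonicity properties. Two preliminary facts are in hand: polynomials are dense in $\cA_\omega^2$ (Taylor truncations converge in the norm $\sum|a_n|^2/\omega(n)^2$), and the $H^\infty$ estimate $\|f\varphi\|_{\omega,2}\le\|f\|_\infty\|\varphi\|_{\omega,2}$ established in the discussion above.

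The first step is to show that $[U^\mu]\subseteq[U^\nu]$ whenever $\mu>\nu>0$. Since $U^{\mu-\nu}\in H^\infty\cap\cA_\omega^2$ with $\|U^{\mu-\nu}\|_\infty\le 1$, I would pick polynomials $q_n\to U^{\mu-\nu}$ in $\cA_\omega^2$ and apply the $H^\infty$ bound to obtain
\[
\|q_nU^\nu-U^\mu\|_{\omega,2}\le\|q_n-U^{\mu-\nu}\|_{\omega,2}\to 0,
\]
so $U^\mu\in[U^\nu]$; shift invariance and closedness of $[U^\nu]$ then force $[U^\mu]\subseteq[U^\nu]$.

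The second step is the integer case: for every positive integer $k$, $U$ is cyclic if and only if $U^k$ is cyclic. One direction is the previous step with $\mu=k$, $\nu=1$. For the other, from $p_nU\to 1$ I would deduce $p_nU^{j+1}\to U^j$ by multiplying by $U^j$ and invoking the $H^\infty$ bound, so $U^j\in[U^{j+1}]$ and therefore $[U^j]\subseteq[U^{j+1}]$ for each $j$; chaining $[1]\subseteq[U]\subseteq\cdots\subseteq[U^k]$ then shows that $U^k$ is cyclic.

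To conclude, for an arbitrary $\lambda>0$ I would pick an integer $k\ge\max(\lambda,1/\lambda)$. Then $U$ cyclic implies $U^k$ cyclic by the integer case, and since $k\ge\lambda$ the first step gives $[U^k]\subseteq[U^\lambda]$, whence $U^\lambda$ is cyclic. Conversely, $U^\lambda$ cyclic implies $(U^\lambda)^k=U^{k\lambda}$ cyclic by the integer case applied to $U^\lambda$, and since $k\lambda\ge 1$ the first step gives $[U^{k\lambda}]\subseteq[U]$, so $U$ is cyclic. Non-integer exponents enter only through the semigroup identity $U^aU^b=U^{a+b}$ and the uniform bound $\|U^a\|_\infty\le 1$, so I anticipate no genuine obstacle: the argument is essentially bookkeeping around the integer case together with the monotonicity from the first step.
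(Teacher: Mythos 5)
Your proposal is correct and rests on the same two ingredients as the paper's proof: the multiplier bound $\|f\varphi\|_{\omega,2}\le\|f\|_\infty\|\varphi\|_{\omega,2}$ and the fact that $H^\infty$-multiples of a function lie in the closure of its polynomial multiples. The paper reduces tersely to the case $\lambda=2$ and argues both implications directly; your version merely makes explicit the monotonicity $[U^\mu]\subseteq[U^\nu]$ for $\mu>\nu$ and the sandwiching by integer exponents that this reduction implicitly relies on.
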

\begin{proof}
Since  functions in $H^\infty$  are  multipliers, it suffices to
prove the lemma in the special case when $\lambda=2$.

Suppose first that $U$ is cyclic. Then there exist  polynomials
$p_n$ such that $\|1-p_nU\|_{\omega,2} \to 0$ as $n\to \infty$. So
$\|U-p_nU^2\|_{\omega,2}\leq \|U\|_\infty\|1-p_nU\|_{\omega,2}\to 0$
and $U^2$ is cyclic.

We now assume that $U^2$ is cyclic. Then there exist polynomials
$p_n$ such that $\displaystyle \|1-p_nU^2\|_{\omega,2}\to 0$ when
$n\to \infty$. So the functions $f_n=p_nU$ in $H^\infty$ have the
property that $\|1-f_nU\|_{\omega,2}\to 0$, which means that $U$ is
cyclic since $\omega(n)\to \infty$, we have
\[ \overline{\{pV \text{
: } p \text{ polynomial} \}}^{\cA^{2}_{\omega}}= \overline{\{ f V
\text{ : }f\in H^\infty\}}^{\cA^{2}_{\omega}}\] for every function
$V$ in $H^\infty$.
\end{proof}

We now turn to our application of the corona theorem.
\begin{lem}\label{corona} Let $\nu$ be the singular measure of $U$ and set $c^2=\nu(\TT)$. Then
for every nonnegative integer $n$, %such that $c^2n\geq 1$,
there is
a function $f_n$ in $H^\infty$ such that
\begin{eqnarray}
\label{norm1} \|1-f_nU\|_{\omega,2} & \leq &
\frac{e^{A(c\sqrt{n}+1)}}{\omega(n)}, \\
\label{norm2} \|f_n\|_{\infty}&\leq& e^{A({c}\sqrt{n}+1)},
\end{eqnarray}
with $A$ an absolute constant.
\end{lem}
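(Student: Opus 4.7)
My plan is to apply Carleson's corona theorem to the pair $\{U,z^n\}$ in $H^\infty(\DD)$. Suppose we can produce $f_n,g_n\in H^\infty$ satisfying the Bezout identity $f_nU+g_nz^n=1$ with $\max(\|f_n\|_\infty,\|g_n\|_\infty)\le e^{A(c\sqrt n+1)}$. Then (\ref{norm2}) is immediate, and (\ref{norm1}) follows from $1-f_nU=g_nz^n$ together with the monotonicity $\omega(n+k)\ge\omega(n)$: writing $g_n(z)=\sum_k\hat g_n(k)z^k$,
\[
\|1-f_nU\|_{\omega,2}^2=\sum_{k\ge 0}\frac{|\hat g_n(k)|^2}{\omega(n+k)^2}\le\frac{\|g_n\|_{H^2}^2}{\omega(n)^2}\le\frac{\|g_n\|_\infty^2}{\omega(n)^2}.
\]
Everything therefore reduces to a pointwise lower bound $|U(z)|^2+|z|^{2n}\ge\delta_n^2$ on $\DD$ with $\log(1/\delta_n)=O(c\sqrt n+1)$, since the two-generator corona theorem then supplies Bezout coefficients of $H^\infty$-norm bounded polynomially in $\delta_n^{-1}$.

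For the lower bound, I would start from the elementary estimate $|U(z)|=\exp(-P_\nu(z))$, where $P_\nu$ is the Poisson integral of $\nu$. The trivial inequality $|\zeta-z|\ge 1-|z|$ for $|\zeta|=1$ gives
\[
P_\nu(z)\le \frac{1-|z|^2}{(1-|z|)^2}\,\nu(\TT) \le \frac{2c^2}{1-|z|},
\]
so $|U(z)|\ge\exp(-2c^2/(1-|z|))$. The key step is then to split $\DD$ at the optimally balanced radius $\rho_n=1-c/\sqrt n$: on the inner disk $\{|z|\le\rho_n\}$ the estimate above gives $|U(z)|\ge e^{-2c\sqrt n}$, while on the outer annulus $\{|z|>\rho_n\}$ the elementary bound $\log(1-r)\ge -2r$ for $r\le 1/2$ yields $|z|^n\ge\rho_n^n\ge e^{-2c\sqrt n}$. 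Combining these gives $|U(z)|+|z|^n\ge e^{-2c\sqrt n}$ throughout $\DD$ whenever $n\ge 4c^2$, and the corona theorem closes the argument in that regime.

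The main technical point is the complementary regime $n<4c^2$, where the balancing radius $c/\sqrt n$ exceeds $1/2$ and the argument above must be modified; one falls back on the choice $\rho_n=1/2$, which yields only the cruder lower bound $\delta_n\gtrsim e^{-O(c^2)}$. The additive $+1$ in the exponent of $e^{A(c\sqrt n+1)}$ is the safety valve that accommodates this weaker estimate after choosing the absolute constant $A$ sufficiently large. Apart from this bookkeeping, the only nonelementary ingredient in the proof is Carleson's corona theorem together with the classical polynomial dependence of the Bezout coefficients on the corona constant.
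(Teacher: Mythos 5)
Your argument is essentially the paper's proof: apply the corona theorem to the pair $\{U,z^n\}$, get the lower bound $|U(z)|\ge \exp(-2c^2/(1-|z|))$ from the Poisson integral, balance at $|z|=1-c/\sqrt n$ to obtain $\inf_{\DD}\bigl(|U(z)|+|z|^n\bigr)\ge e^{-2c\sqrt n}$, and use the monotonicity of $\omega$ to convert $\|g_n z^n\|_{\omega,2}$ into $\|g_n\|_\infty/\omega(n)$ (the paper phrases this last step via contractivity of the shift and von Neumann's inequality, you via Taylor coefficients; it is the same fact). The one place you go beyond the paper is the regime $n<4c^2$, and there your fix does not actually work: the fallback $\delta_n\gtrsim e^{-O(c^2)}$ yields corona solutions with $\|f_n\|_\infty\le e^{O(c^2)}$, and $e^{O(c^2)}\le e^{A(c\sqrt n+1)}$ with an \emph{absolute} constant $A$ would require $c^2\lesssim A(c\sqrt n+1)$, which fails for $n=1$ and $c$ large; the additive $+1$ cannot absorb a quantity growing like $c^2$ unless $A$ is allowed to depend on $c$. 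You are right, however, that something has to be said in that regime --- the paper's displayed inequality $\inf(|U|+|z|^n)\ge e^{-2c\sqrt n}$ is itself false at $z=0$ once $n<c^2$. The clean resolution is to record the lemma only for $n\ge c^2$ (say), which is all that is ever used: in the proof of Theorem 1 the lemma is applied to $U^{\lambda_j^2}$, whose singular mass is $c^2\lambda_j^2\le (\log\omega(m_j))^2/(16A^2m_j)=o(m_j)$, so the restriction costs nothing. State it as a hypothesis rather than trying to hide it in the constant.
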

\begin{proof}
Note that
\begin{eqnarray*}
\inf_{z\in \DD}\Big[|U(z)| +|z|^n\Big]&\geq&
\inf_{z\in \DD}\Big[ \expÊ\frac{-2c^2}{1-|z|} + |z|^n\Big]\\
&\geq& e^{-2c\sqrt{n}}=\delta_n.
\end{eqnarray*}
By Carleson's corona theorem \cite{Ca}, \cite[p. 66]{Nik}, there
exist $f_n,g_n\in H^\infty$ such that
$$\left\{
\begin{array}{lll}
f_n U+ z^n g_n =1,\\
\\
\|f_n\|_\infty\leq 2^5\delta_n^{-3},\ \ \ \|g_n\|_\infty\leq 2^5\delta_n^{-3},\\
\end{array}
\right.
$$
which implies that \eqref{norm2} is satisfied for some absolute
positive constant $A$. %since $c^2n\geq 1$.
We observe that
\eqref{norm1} also holds by the observation above and the estimate
$$\|1-f_nU\|_{\omega,2}=\|z^n g_n\|_{\omega,2}\leq \|g_n\|_\infty \|z^n\|_{\omega,2}.$$
\end{proof}

\subsection*{Proof of Theorem~\ref{cyclic}}
 Let $f$ be a function in
$ H^\infty$ without any zeros in $\DD$. We write $f= FU$, where $F$
is an outer function and $U$ is a singular inner function with
associated singular measure $\nu$.  Since $F$ is outer, $F$ is
cyclic in $H^2\subset \cA^2_\omega$. Hence it remains only to prove
that $U$ is cyclic in $\cA^2_\omega$.

%Without loss of generality, we suppose that
%$$\omega(n)=O( e^{\sqrt{n}}).$$
% Indeed, take $\sigma(n)=\min(e^{\sqrt{n}},\omega(n))$, this is weight sequence satisfying \eqref{newquasi}, Indeed let $n_j$ the sequence of the Lemma \ref{equiv}, set $I=\{j\text{ :} \sigma(n_j)=e^{\sqrt{n_j}}\}$ and $J= \NN\backslash I$. We have
%$$\sum_{j=1}^{N}\frac{(\log \sigma(n_j))^2}{n_j}=\text{Card } (I\cap  \llbracket1,N \rrbracket)+\sum_{n\in J\atop n\leq N}\frac{(\log \omega(n_j))^2}{n_j}.$$
%The Lemma \ref{equiv} gives that $\sigma$ satifies \eqref{quasi} and so \eqref{newquasi}.
%Futhermore if $U$ is cyclic   in $\cA^2_\sigma$ then $U$ is cyclic in  $\cA^2_\omega$.

Let $m_1,m_2,\ldots,m_N$ be arbitrary positive integers and
$\lambda_j$ associated positive numbers such that
$$\sum_{j=1}^{N}\lambda_j^2=1.$$
%and $c^2\lambda_j^2m_j\geq 1$ with $c^2=\nu(\TT)$.
Set $U_j=U^{\lambda_j^2}$ and let $f_j$ be a function such that
$$
\left\{\begin{array}{ll}
\displaystyle \|1-f_jU_j\|_{\omega,2}\leq \exp[A(c\lambda_j\sqrt{m_j}+1)-\log \omega(m_j)],\\
\\
\displaystyle \|f_j\|_{\infty}\leq e^{A(c\lambda_j\sqrt{m_j}+1)}, \; \;\;  j=1,\ldots,N.\\
\end{array}
\right.
$$
By Lemma \ref{corona}, such a function exists for every $j$, with
$A$ an absolute constant. Since
$$1-U\prod_{j=1}^{N}f_j=1-U_1f_1+ U_1f_1(1-U_2f_2)+\ldots+ \prod_{j=1}^{N-1}U_jf_j(1-U_Nf_N),$$
we get
$$\|1-U\prod_{j=1}^{N}f_j\|_{\omega,2}\leq\sum_{j=1}^{N}\exp\Big[\sum_{k=1}^{j}
A(c\lambda_k\sqrt{m_k}+1)- \log\omega(m_j)\Big].$$ Now choose
$m_j=n_{j+j_0}$ for some $j_0$, where $(n_j)$ is the sequence
obtained from Lemma~\ref{equiv}. This means that $\log
\omega(n_{j+1})\ge 2\log \omega(n_j)$ and also that \eqref{quasi}
holds. Let $N=N(j_0)$ be such that
$$N=\min\left\{M\text{ : } \sum_{j=1}^{M}\frac{(\log\omega(n_{j+j_0}))^2}{{n_{j+j_0}}}\geq (4Ac)^2\right\}$$
and set
$$\lambda_j=\frac{\log\omega(n_{j_0+j})}{\sqrt{n_{j_0+j}}}
\Big(\sum_{k=1}^{N}\frac{(\log\omega(n_{j_0+k}))^2}{{n_{j_0+k}}}\Big)^{-1/2}.
$$
%Since $\omega(n)=O(e^{\sqrt{n}})$, $c^2\lambda_j^2m_j\geq 1$.
By our choice of sequence $(n_j)$, we have then
$$\lambda_j\leq \frac{1}{4Ac}\frac{\log\omega(n_{j_0+j})}{\sqrt{n_{j_0+j}}}
\ \ \ \text{and} \ \ \  \sum_{k=1}^{j}\log\omega(n_{j_0+k})\leq
2\log \omega(n_{j_0+j}).$$ Thus we get
\[
\|1-U\prod_{j=1}^{N}f_j\|_{\omega,2}\leq \sum_{j=1}^{N}e^{Aj-\frac{1}{2}\log\omega(n_{j_0+j})}\\
 \leq \frac{C}{\sqrt{\omega(n_{j_0+1})}}
\]
for an absolute constant $C$. This finishes the proof since
$\omega(n_{j_0+1})\to \infty$ when $j_0\to \infty$.

\section{Proof of Theorem 2}

For the proof of Theorem~2, we need the following two lemmas.

\begin{lem}\label{function-delta}
Suppose that $0<\delta<1$ and let $f_{\delta }$ be the outer
function defined by
$$
f_{\delta}(z)=\exp \Big({-\Lambda (\delta)\displaystyle \int _{\delta<|t|<\pi}\frac{e^{it}+z}{e^{it}-z}dt }\Big).
$$
If $a$ is in $]0,(2\pi)^{-1}]$, then we have \begin{eqnarray}
|f_{\delta}(z)|& \geq & e^{-4\pi^2\Lambda (\delta)a} \ \ \text{for}
\ \
  \frac{|1-z|^2}{1-|z|^2}\leq a\delta , \label{first}  \\
  \|f_{\delta}\|_{\Lambda,\{1\},\infty}& \leq  & e^{-\pi \Lambda (\delta)}.
\label{second}
\end{eqnarray}
\end{lem}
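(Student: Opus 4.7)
The plan is to rewrite $|f_\delta|$ as a Poisson-type integral: since the real part of $(e^{it}+z)/(e^{it}-z)$ is the Poisson kernel $(1-|z|^2)/|e^{it}-z|^2$, we have $|f_\delta(z)| = \exp(-\Lambda(\delta) J(z))$ where $J(z) = \int_{\delta<|t|<\pi}(1-|z|^2)/|e^{it}-z|^2\, dt$. Both inequalities of the lemma then reduce to quantitative control of $J$.

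For \eqref{first}, I would use the hypothesis together with $1-|z|^2 \leq 2|1-z|$ to deduce $|1-z| \leq 2a\delta \leq \delta/\pi$ (here $a \leq 1/(2\pi)$) and hence $1-|z|^2 \leq 4a\delta$. For $|t|>\delta$ the triangle inequality with the standard bound $|e^{it}-1| \geq 2|t|/\pi$ gives $|e^{it}-z| \geq |e^{it}-1|-|1-z| \geq |e^{it}-1|/2 \geq |t|/\pi$, so $|e^{it}-z|^2 \geq t^2/\pi^2$. Plugging this into $J(z)$ and using $\int_\delta^\pi dt/t^2 \leq 1/\delta$ produces $J(z) \leq C a$ for an absolute constant $C$, and exponentiating yields \eqref{first}.

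For \eqref{second}, I would decompose $J(z) = 2\pi - \tilde J(z)$ with $\tilde J(z) = \int_{-\delta}^{\delta}(1-|z|^2)/|e^{it}-z|^2\, dt$, the Poisson mass on the inner arc, so that the desired bound $|f_\delta(z)|e^{-\Lambda(|1-z|)} \leq e^{-\pi\Lambda(\delta)}$ becomes $\Lambda(|1-z|) \geq \Lambda(\delta)(\tilde J(z)-\pi)$. If $\tilde J(z) \leq \pi$ this is automatic since $\Lambda \geq 0$. Otherwise $z$ lies in the region where the harmonic measure of the inner arc from $z$ exceeds one half; the explicit radial formula $\tilde J(1-t) = 4\arctan((2-t)\tan(\delta/2)/t)$, together with its rotations, lets one convert the excess $\tilde J(z)-\pi$ into quantitative proximity of $z$ to $1$, and the assumption that $\Lambda$ is nonincreasing with $\Lambda(0^+) = +\infty$ should then supply the required lower bound on $\Lambda(|1-z|)$.

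The main obstacle lies precisely in this second case: the transitional regime where $\tilde J(z)$ is only slightly above $\pi$ and $|1-z|$ is comparable to $\delta$. There one must carefully track the decay of $\tilde J(z)-\pi$ as $z$ moves away from $1$ and match it against the monotone growth of $\Lambda$ toward $0$, with no regularity on $\Lambda$ to lean on.
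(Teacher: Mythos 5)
Your treatment of \eqref{first} is correct and is essentially the paper's own argument: the hypothesis gives $|1-z|\le 2a\delta\le\delta/\pi$ and $1-|z|^2\le 4a\delta$, the triangle inequality gives $|e^{it}-z|\ge |t|/\pi$ on $\delta<|t|<\pi$, and integrating yields $J(z)\le 8\pi^2 a$. (This is a factor $2$ short of the stated constant $4\pi^2$, but the paper's own computation lands at the same place, and the precise constant is irrelevant for the application in Lemma~\ref{corona2}.)

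For \eqref{second} there is a genuine gap, and the route you propose for closing it cannot succeed. You correctly reduce the claim to $\Lambda(|1-z|)\ge\Lambda(\delta)\bigl(\tilde J(z)-\pi\bigr)$ and dispose of the case $\tilde J(z)\le\pi$, but the remaining case is the whole content of the estimate, and the pointwise inequality you are aiming at is false without regularity of $\Lambda$. Indeed, take $\Lambda$ constant on $[\delta/4,2]$ (it may still blow up at $0^+$) and $z=1-\delta/4$: your own radial formula gives $\tilde J(z)=4\arctan\bigl((2-t)\tan(\delta/2)/t\bigr)\big|_{t=\delta/4}\approx 4\arctan 4>\pi+2$ for small $\delta$, while $\Lambda(|1-z|)=\Lambda(\delta)$, so the required inequality fails by more than a factor of $2$. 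Hence no amount of careful tracking of the excess $\tilde J(z)-\pi$ against the growth of $\Lambda$ will work; in fact \eqref{second} as stated only holds with exponent $-\Lambda(\delta)$ rather than $-\pi\Lambda(\delta)$ for such $z$, which is still amply sufficient for Lemma~\ref{corona2}. The paper sidesteps the transitional regime entirely by splitting on $|1-z|$ versus $\delta$ instead of on $\tilde J(z)$ versus $\pi$, and by never trading the two factors against each other: if $|1-z|\le\delta$, discard $|f_\delta|$ altogether and use only $|f_\delta(z)|\le 1$ together with $\Lambda(|1-z|)\ge\Lambda(\delta)$, giving $|f_\delta(z)|e^{-\Lambda(|1-z|)}\le e^{-\Lambda(\delta)}$; if $|1-z|>\delta$, discard $e^{-\Lambda(|1-z|)}\le 1$ and observe that $z$ then lies beyond the hyperbolic geodesic joining $e^{\pm i\delta}$, so the harmonic measure of $E_\delta$ at $z$ is at least $1/2$ and $|f_\delta(z)|\le e^{-\pi\Lambda(\delta)}$ outright. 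Each half of the dichotomy uses only one of the two factors, so there is nothing to match up and no regularity of $\Lambda$ is needed.
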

\begin{proof}
We first prove \eqref{first}. Let $z$ be a point in  $\DD$ such that
$ \frac{|1-z|^2}{1-|z|^2}\leq a \delta$. Then we have $1-|z|\le
|1-z|\le 2 a\delta$, which implies that
\[
\log|f_{\delta}(z)|^{-1} =  \Lambda (\delta)\displaystyle \int
_{\delta < |t|< \pi}\frac{1-|z|^2}{|e^{it}-z|^2}dt \le
8 a\delta \Lambda(\delta) \int _{\delta < t < \pi} \frac{dt}{(|e^{it}-1|-2a\delta)^2}.
\]
Using that $|e^{it}-1|\ge 2t/\pi$ for $0\le t\le \pi$ and that
$a\le(2\pi)^{-1}$, we obtain the desired estimate \eqref{first}.

We will now prove \eqref{second}; we will do this by showing that
for every $z$ in $\DD$, we have
\begin{equation}\label{tobeproved} |f_{\delta }(z)|\leq e^{-\pi\Lambda
(\delta)+\Lambda (|1-z|)}.\end{equation} When $|1-z|\le \delta$,
this inequality holds because $|f_{\delta}(z)|\leq 1$ and $\Lambda$
is a decreasing function. To deal with the case $|1-z|>\delta$, we
argue as follows. Let $E_\delta$ be the sub-arc of points $e^{it}$
on the unit circle satisfying  $\delta \le |t| \le \pi$. Then we may
write
\[
 \log |f_{\delta}(z)| = -\Lambda (\delta)
\displaystyle \int _{ \delta<|t|<
\pi}\frac{1-|z|^2}{|e^{it}-z|^2}dt=- \Lambda(\delta)2\pi \varpi(z,
E_\delta, \DD),\] where $\varpi(z, E_\delta, \DD)$ denotes harmonic
measure of $E_\delta$ at $z$ in $\DD$. A simple geometric argument
shows that when $|z-1|>\delta$, $z$ lies in the domain bounded by
$E_\delta$ and the hyperbolic geodesic between the endpoints of
$E_\delta$. Therefore, $\varpi(z, E_\delta, \DD)\ge 1/2$, and
\eqref{tobeproved} follows.
\end{proof}

\begin{lem}\label{corona2}
Let $c$ be a positive number and $n$ a positive integer such that
$4\pi^2cn\le \Lambda (1/n)$, and set $I_c(z) =
e^{-c\frac{1+z}{1-z}}$. Then there exists a bounded analytic
function $g_n$ such that
\[ \aligned
 \|1-g_nI_c\|_{\Lambda,\{1\},\infty} & \leq
e^{B\sqrt{cn\Lambda (1/n)}-\pi \Lambda (1/n)}, \\
 \|g_n\|_{\infty} &\leq e^{B\sqrt{cn\Lambda
  (1/n)}},
\endaligned \]
where $B$ is a universal constant.
\end{lem}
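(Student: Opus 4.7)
The plan is to mimic the proof of Lemma \ref{corona} but with a different Bezout pair. Instead of pairing $U$ with the monomial $z^n$, I would apply Carleson's corona theorem to the pair $(I_c, f_\delta)$, where $f_\delta$ is the outer function from Lemma \ref{function-delta} with the choice $\delta=1/n$. The outer function $f_\delta$ plays the role of $z^n$: it is the natural object that is small in $\cB^\infty_{\Lambda,\{1\}}$ (by \eqref{second}) yet bounded below away from the point $1$ (by \eqref{first}).

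The main step is to verify the corona hypothesis for the pair $(I_c,f_\delta)$. I would split the disk using the sub-level set $\{z\in\DD:|1-z|^2/(1-|z|^2)\le a\delta\}$ for a parameter $a\in(0,(2\pi)^{-1}]$ to be chosen. In this region Lemma \ref{function-delta} gives
\[
|f_\delta(z)|\ge e^{-4\pi^2 \Lambda(\delta)a}.
\]
In the complementary region $|1-z|^2/(1-|z|^2)>a\delta$, the identity $|I_c(z)|=\exp\bigl(-c(1-|z|^2)/|1-z|^2\bigr)$ yields
\[
|I_c(z)|\ge e^{-c/(a\delta)}.
\]
Therefore $\inf_{z\in\DD}(|f_\delta(z)|+|I_c(z)|)\ge\delta_n$, where $\delta_n$ is the minimum of the two exponentials.

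I would then balance the two exponents by choosing $a=(2\pi)^{-1}\sqrt{cn/\Lambda(1/n)}$; the hypothesis $4\pi^2 cn\le \Lambda(1/n)$ is exactly what is needed to ensure $a\le (2\pi)^{-1}$, so Lemma \ref{function-delta} applies. With this choice both exponents equal $-2\pi\sqrt{cn\Lambda(1/n)}$, so $\delta_n=e^{-2\pi\sqrt{cn\Lambda(1/n)}}$. Carleson's theorem then produces $f_n,g_n\in H^\infty$ with $f_nf_\delta+g_nI_c=1$ and $\|f_n\|_\infty,\|g_n\|_\infty\lesssim \delta_n^{-3}$, which gives the desired bound on $\|g_n\|_\infty$ after absorbing the constant into $B$.

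Finally, $1-g_nI_c=f_nf_\delta$, so
\[
\|1-g_nI_c\|_{\Lambda,\{1\},\infty}\le \|f_n\|_\infty\,\|f_\delta\|_{\Lambda,\{1\},\infty}\le \delta_n^{-3}\,e^{-\pi\Lambda(1/n)}
\]
by \eqref{second}, and this is bounded by $e^{B\sqrt{cn\Lambda(1/n)}-\pi\Lambda(1/n)}$ for a suitable absolute constant $B$. The only subtle point is the calibration of $a$: one must verify both that $a\le(2\pi)^{-1}$ (so Lemma \ref{function-delta} is applicable) and that the two worst-case lower bounds are genuinely balanced, which is precisely where the hypothesis $4\pi^2 cn\le\Lambda(1/n)$ enters. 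Everything else is a direct transcription of the strategy already used for Lemma \ref{corona}.
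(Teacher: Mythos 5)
Your proposal is correct and follows essentially the same route as the paper: apply Lemma~\ref{function-delta} with $\delta=1/n$ and $a\asymp\sqrt{cn/\Lambda(1/n)}$, obtain the corona lower bound for the pair $(f_\delta,I_c)$ by splitting the disk along the level set of $|1-z|^2/(1-|z|^2)$, and then invoke Carleson's theorem together with \eqref{second}. The only (cosmetic) difference is your normalization $a=(2\pi)^{-1}\sqrt{cn/\Lambda(1/n)}$, which exactly balances the two exponents, whereas the paper takes $a=\sqrt{cn/\Lambda(1/n)}$ and simply uses the minimum of the two bounds.
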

\begin{proof}
Applying Lemma~\ref{function-delta} with $\delta =\delta_n=1/n$ and
$a= \sqrt{\frac{cn}{\Lambda (1/n)}}$, we obtain
\[
|f_{\delta_n}(z)|+|I_c(z)|\geq \min (e^{-4\pi^2a\Lambda
(\delta)},e^{-\frac{c}{a\delta}})=  e^{-4\pi^2\sqrt{cn\Lambda
(1/n)}},
\]
and
\[
\|f_{\delta_n}\|_{\Lambda,\{1\},\infty}\leq  e^{-\pi \Lambda (1/n)}.
\]
By the corona theorem, we obtain the desired estimates.
\end{proof}

\subsection*{Proof of Theorem~2}
Assume first that $\Lambda$ is integrable. We will use Keldy\v{s}'s
method \cite{K,N} to prove that this implies that $I$ is not cyclic.
So suppose to the contrary that $I$ is cyclic. Then there exists a
sequence of polynomials $(p_n)$ such that
$\|Ip_n-1\|_{\Lambda,\{1\},\infty}\to 0$. Thus, in particular, we
have $C=\sup _n\|Ip_n\|_{\Lambda,\{1\},\infty} <\infty$. Since
$|I^*(\zeta)| =1$ for all $\zeta$ in $\TT \setminus \{1\}$, we
obtain $|p_n(\zeta)|\leq Ce^{-\Lambda (|1-\zeta|)}$. Let $F$ be the
outer function given by
$$
F(z)=\exp \Big({-\displaystyle \int _0^{2\pi}\frac{e^{it}+z}{e^{it}-z}\Lambda (|1-e^{it}|)\frac{dt }{2\pi}}\Big).
 $$
By the  generalized maximum principal, $|p_n(z)|\leq C |F(z)|$ for
all $|z|<1$. But we also have $\displaystyle \lim _{n\to \infty }
p_n(z)= 1/|I(z)|$, so that $|I(z)|^{-1} \leq |F(z)|$, but this is
impossible since $I$ is a singular inner function and $F$ is an
outer function.

The proof of the converse  is essentially the same as the proof of
Theorem \ref{cyclic}, and we will therefore only sketch the
argument. We begin by noting that
$$\displaystyle \int _0^2 \Lambda (t)dt =\infty \ \ \ \Leftrightarrow \ \ \ \displaystyle
\sum _{n\geq 1}\frac{\Lambda (1/n)}{n^2}=\infty.$$ So, by Lemma
\ref{equiv}, there exists a sequence $(n_j)$ such that
\[
\Lambda (n_{j+1})\geq 2\Lambda (n_j)\quad \text{and}\quad  \sum _{n\geq 1}\frac{\Lambda (1/n_j)}{n_j}=\infty.
\]
We fix $j_0$ and choose $N$ so large that
\begin{equation}\label{assumeonN} \sum_{j=j_0+1}^N
\frac{\Lambda(1/n_j)}{n_j}\ge 4\pi^2 B^2.
\end{equation}
Following the scheme of proof for Theorem 1, we  make the choice
\[ U_j=I_{\lambda_j^2},\]
where
 \[
\lambda_j^2= \frac{\Lambda (1/n_j)}{n_j}\left(\displaystyle
\sum_{k=1}^N\frac{\Lambda (1/n_{j_0+k})}{n_{j_0+k}}\right) ^{-1}.
 \]
By our assumption \eqref{assumeonN}, Lemma~\ref{corona2} applies
with $c=\lambda_j^2$ and $n=n_j$ for $j_0+1\le j\le j_0+N$. The rest
of the proof follows step by step the last part of the proof of
Theorem~1. We omit the details.

Let us  note that if we in the latter argument replace the
function $f_{\delta_n}$ in Lemma~\ref{corona2} by
$z^{[n\Lambda(1/n)]}$ (here $[x]$ denotes the integer part of $x$),
then we obtain the result mentioned in Remark 5 of the introduction:
If $\Lambda$ fails to be integrable, then every singular inner
function is cyclic in $\cB_{\Lambda,\TT}^\infty$.

We finally mention that, by the same method of proof, we may replace
$\cB_{\Lambda,\{1\}}^{\infty}$ in Theorem~2 by
$\cB_{\Lambda,\{1\}}^{2}$, which is the Hilbert space of analytic
functions $f$ on $\DD$ such that
 \[
 \|f\|_{\Lambda,\{1\},2}^2=\int_\DD
|f(z)|^2e^{-2\Lambda(|1-z|)}d\mu(z)<\infty.
\]

\end{document}